\documentclass[centertags, reqno]{amsart}                  
\usepackage{amssymb}  
\usepackage{graphicx}
\usepackage{url}    
\usepackage{dsfont}

\newcommand{\id}{\mathds{1}}


\newtheorem{thm}{Theorem}
\newtheorem*{thm*}{Theorem} 
\newtheorem{lemma}{Lemma}
\newtheorem*{lem*}{Lemma}
\newtheorem{cor}{Corollary}

\newtheorem*{convention}{Convention}

\theoremstyle{definition}

\theoremstyle{remark}
\newtheorem{rem}{Remark} 
\newtheorem{ex}{Example}


\newcommand{\mr}{{\mathbb R}}

\newcommand{\mn}{{\mathbb N}}

\newcommand{\mc}{{\mathbb C}}
\newcommand{\md}{{\mathbb D}}


\renewcommand{\rho}{\varrho}
\newcommand{\eps}{\varepsilon}

\newcommand{\ran}{\operatorname{Ran}}
\newcommand{\hil}{\mathcal{H}}
\newcommand{\bdd}{\mathcal{B}}
\newcommand{\num}{\operatorname{Num}}

\usepackage{xcolor}
\usepackage{enumerate} 
 

\usepackage{mathtools}

 
\newcommand{\VERT}[1]{{\left\vert\kern-0.25ex\left\vert\kern-0.25ex\left\vert #1 
    \right\vert\kern-0.25ex\right\vert\kern-0.25ex\right\vert}}

\begin{document}

\title{Eigenvalues of compactly perturbed operators via entropy numbers}

\author[M. Hansmann]{Marcel Hansmann}
\address{Faculty of Mathematics\\ 
Chemnitz University of Technology\\
Chemnitz\\
Germany.}
\email{marcel.hansmann@mathematik.tu-chemnitz.de}

\thanks{I would like to thank M. Demuth and G. Katriel for some valuable remarks. This work was funded by the Deutsche Forschungsgemeinschaft (DFG, German Research Foundation) - Project number HA 7732/2-1.}

\begin{abstract}
We derive new estimates for the number of discrete eigenvalues of compactly perturbed operators on Banach spaces, assuming that the perturbing operator is an element of a weak entropy number ideal. Our results improve upon earlier results by the author and by Demuth et al. The main tool in our proofs is an inequality of Carl. In particular, in contrast to all previous results we do not rely on tools from complex analysis.  
\end{abstract}

\maketitle

\section{Introduction}   

The classical theory of Riesz \cite{MR1555146} tells us that the non-zero spectrum of a compact  operator $K$, acting on an infinite dimensional Banach space, consists of discrete eigenvalues $(\lambda_n)$ and that these eigenvalues can accumulate at $0$, the only point in the essential spectrum, only. Looking at this result it is quite natural to ask for a more quantitative version of it. That is, considering suitable subspaces of compact operators, can we say something about the speed of accumulation of the sequence of eigenvalues? For instance, is this sequence in $l_p$ for a suitable $p$? It turns out that for many classes of compact operators one can study these questions successfully and today the corresponding field of operator theory, the study of the 'distribution of eigenvalues of compact operators', is very well developed and its results are successfully applied in the analysis of concrete (differential) operators. We refer to monographs by K\"onig \cite{MR1863710} and Pietsch \cite{MR917067} for an overview on this topic.

The present paper is \emph{not} about eigenvalues of compact operators. Instead, we will study \emph{compactly perturbed} operators. More specifically, given a 'free' bounded operator $A$ and a compact perturbation $K$, we are interested in the discrete eigenvalues of the perturbed operator $A+K$ and their speed of accumulation to the essential spectrum of $A$ (we recall that by Weyl's theorem, the essential spectra of $A$ and $A+K$ coincide). Our goal is to obtain more precise information on this speed of accumulation, given more restrictive assumptions on the perturbation $K$. As compared to the compact case, which is included as the special case $A=0$, this more general situation is quite less well understood, at least in the case of general Banach space operators which we consider here. 

Unfortunately, we immediately have to admit that our goals, as sketched in the previous paragraph, are a little too high. Indeed, independent of the subspace $\mathcal S$ of compact operators we consider, in the stated generality it is not possible to control the speed of accumulation of $(\lambda_n(A+K))$ in terms of the perturbation $K \in \mathcal S$ alone. To see this, it suffices to consider the trivial case where $K=0$: Then we can always find a suitable $A$ such that the discrete eigenvalues of $A+K=A$ accumulate at the essential spectrum as slowly or as fast as we like. -- Clearly, there exist ways to circumvent this problem and in the present paper we will do it by restricting ourselves to certain subsets of discrete eigenvalues of $A+K$, namely those eigenvalues which lie \emph{outside} the closed disk $\overline\md_{r(A)}:=\{ \lambda \in \mc : |\lambda| \leq r(A)\}$, where $r(A)$ denotes the spectral radius of $A$. 

In order to discuss the results of this paper in a little more detail, we now have to specify the subspace $\mathcal S$ of compact operators we will consider. Here our main emphasis will be on the class $\mathcal S_{p,\infty}^{(e)}$ of all compact operators $K$ whose sequence of \emph{entropy numbers} $(e_n(K))$ is in $l_{p,\infty}$, the weak $l_p$-space (we refer to Section \ref{sec:prelim} for the precise definitions). Since our results look particularly simple in the case where $r(A)=\|A\|$, let us consider this case first: Let $n_{A+K}(s)$ denote the number of discrete eigenvalues of $A+K$ \emph{outside} the disk $\overline\md_s$ (counting algebraic multiplicities). Then we will prove in Theorem~\ref{thm:1} below that for every bounded operator $A$ and $K \in \mathcal S_{p,\infty}^{(e)}, p > 0,$ we have 
\begin{equation}
  \label{eq:2}
  n_{A+K}(s) \leq C_p \frac{s}{(s-\|A\|)^{p+1}} \|K\|_{p,\infty}^p, \qquad s > \|A\|,
\end{equation}
with the explicit constant $C_p=(\ln(2) (p+1)^{p+1})/(2p^p)$. It is instructive to consider a specific consequence of the previous inequality. To this end, let us denote the discrete eigenvalues of $A+K$ outside $\overline{\md}_{\|A\|}$ decreasingly, counting algebraic multiplicities, by $|\lambda_1(A+K)| \geq |\lambda_2(A+K)| \geq \ldots > \|A\|$ (using the convention that  $\lambda_{m+1}(A+K)=\lambda_{m+2}(A+K)=\ldots=\|A\|$ in case there are only $m \in \mn_0$ such eigenvalues). Then the  weak $l_q$-norm of the sequence $(|\lambda_n(A+K)|-\|A\|)$ can be characterized as
$$ \| (|\lambda_n(A+K)| - \|A\|) \|_{{q,\infty}}^q = \sup_{s > \|A\|} \big( (s-\|A\|)^q n_{A+K}(s) \big),$$ 
compare (\ref{eq:1}) below, and so we see that (\ref{eq:2}) leads to the following implications:
\begin{equation}
  \label{eq:3}
  (e_n(K)) \in l_{p,\infty} \quad \Rightarrow \quad (|\lambda_n(A+K)|-\|A\|) \in \left\{
    \begin{array}{cl}
      l_{p,\infty}, & A=0 \\
      l_{p+1,\infty}, & A \neq 0.
    \end{array}\right.
\end{equation}
For the compact case ($A=0$) this is a well-known consequence of a result of Carl \cite[Theorem 4]{MR619953}. In the non-compact case, both (\ref{eq:2}) and (\ref{eq:3}) are new and improve upon earlier results of Hansmann \cite[Corollary 6.6]{Hansmann15} and of Demuth et al. \cite[Corollary 4.3]{MR3296588}. Indeed, in \cite{MR3296588} the authors used infinite determinants and complex function theory to prove (among other things) an inequality similar to (\ref{eq:2}), and hence that $(|\lambda_n(A+K)|-\|A\|) \in l_{p+1,\infty}$, but under the stronger assumption that $K \in \mathcal S_{p}^{(a)}$, i.e. that the \emph{approximation numbers} $(a_n(K))$ are in $l_p$. In \cite{Hansmann15} the author extended the results of \cite{MR3296588} (using the same methods of proof) to the case where $K \in \mathcal S_{p}^{(e)}$, but only for operators on such Banach spaces where the finite rank operators are dense in $\mathcal S_p^{(e)}$ (we note that this need not always be the case, see \cite{MR3282642}, a fact which has been overlooked in \cite{Hansmann15}). Since in general
$$ \mathcal S_p^{(a)} \subset \mathcal S_p^{(e)} \subset \mathcal S_{p,\infty}^{(e)},$$
we see that our above result is stronger than all the previously known results. Our short and simple proof of (\ref{eq:2}) will rely on an inequality of Carl \cite[Theorem 3]{MR619953} between individual eigenvalues and entropy numbers of a compact operator. In particular, our method of proof does not involve the use of complex function theory and is thus completely different to all previous methods.
 
We do not know whether in general Banach spaces the implication (\ref{eq:3}) is optimal or whether the index $p+1$ on the right-hand side can be replaced by something smaller (but greater or equal to $p$).
However, we do know that even for a rank one perturbation $K$ the sequence $(|\lambda_n(A+K)|-\|A\|)$ need not be in $l_{q,\infty}$ for any $q<1$, see Example \ref{ex:1} below.  

On the other hand, for Hilbert space operators the index $p+1$ in (\ref{eq:3}) can indeed be replaced by $p$. This will be a consequence of a more general result for eigenvalues of Hilbert space operators $A+K$ outside the numerical range of $A$, which we present in Theorem \ref{thm:3} below and which extends an earlier result in \cite{Hansmann11}. In this context we will also allow for more general perturbations $K \in \mathcal S_{p,q}^{(e)}$, replacing the weak $l_p$-spaces by general Lorentz sequence spaces $l_{p,q}$.  

Finally, let us take a look at our results for the case where $r(A) < \|A\|$. In this case, using Rota's universal operator model \cite{MR0112040} and the fact that similar operators have the same spectrum, we will be able to derive the following estimate from estimate (\ref{eq:2}) above (see Theorem \ref{thm:2}): If $S \in \mathcal S_{p,\infty}^{(e)}, p > 0$, then
\begin{equation}
  \label{eq:4}
 n_{A+K}(s) \leq 2^{p+1} C_p \frac{ s M_A^p(\frac{s+r(A)}{2})} {(s-r(A))^{p+1}} \|K\|_{{p,\infty}}^p, \qquad s>r(A).  
\end{equation}
As compared to (\ref{eq:2}) here we have an additional term $M_A(r):= \sup_{n \in \mn_0} \|A^n\|/r^n$ on the right-hand side, which can, depending on the properties of the free operator $A$, grow quite rapidly for $r \to r(A)$ (see Remark \ref{rem:2} (iii) below). While we are not aware of any estimate of this kind in the literature, we should note that an estimate of this form can also be obtained using the methods and results of Demuth et al. in \cite{MR3296588} (by exploiting the fact that, as pointed out to the author by Guy Katriel, the function $M_A(r)$ can be used to estimate the resolvent norm of $A$ for resolvent values outside $\overline\md_{r(A)}$). However, our idea to obtain estimate (\ref{eq:4}) via Rota's universal model (and thus avoiding the use of resolvents) seems to be new. 

While in the present paper we will not speak about applications of the above results, we have to mention that all eigenvalue estimates presented can indeed be applied in the study of concrete (differential) operators, since the $\mathcal S_{p,\infty}^{(e)}$-norms of broad classes of operators (for instance of integral or embedding operators) are quite well understood. We refer to the monograph \cite{MR1098497} of Carl and Stephani for an overview on this topic. See also Remark \ref{rem:5} (ii) below.

We conclude this introduction with a short sketch of the contents of this paper: In the following preliminary section we introduce notation and discuss the basics of the Lorentz sequence spaces and the associated $s$-number ideals $\mathcal S_{p,q}^{(s)}$, focusing on the approximation and entropy number ideals used in this paper. In Section 3, we will present and prove our eigenvalue estimates for general Banach space operators and in the final Section 4 we will consider the case of operators on Hilbert spaces.

\section{Preliminaries}\label{sec:prelim}

Throughout this paper $X,Y$ denote complex Banach spaces and $\bdd(X,Y)$ denotes the space of all bounded linear operators from $X$ to $Y$. We set $\bdd(X):=\bdd(X,X)$. If $X$ is a Hilbert space we usually use the symbol $\hil$ instead. The \emph{spectrum} and \emph{discrete spectrum} of $A \in \bdd(X)$ are denoted by $\sigma(A)$ and $\sigma_d(A)$, respectively. We recall that $\sigma_d(A)$ consists of all isolated eigenvalues of finite algebraic multiplicity (also called the \emph{discrete eigenvalues} of $A$). The \emph{essential spectrum} $\sigma_{ess}(A)$ is defined as the set of all $\lambda \in \mc$ such that $\lambda-A$ is not a Fredholm operator. One always has $\sigma_{ess}(A) \cap \sigma_d(A)=\emptyset$ and if $\Omega \subset \mc \setminus \sigma_{ess}(A)$ is a connected component, then either (i) $\Omega \subset \sigma(A)$ or (ii) the spectrum of $A$ in $\Omega$ is purely discrete and the discrete eigenvalues can accumulate at $\sigma_{ess}(A)$ only. In particular, (ii) is true if $\Omega$ is the unbounded component of $\mc \setminus \sigma_{ess}(A)$. The \emph{spectral radius} and \emph{essential spectral radius} of $A$ are defined as
$ r(A)=\max\{ |\lambda| : \lambda \in \sigma(A)\}$ and $r_{ess}(A) = \max\{ |\lambda| : \lambda \in \sigma_{ess}(A)\}$, respectively. The classical \emph{Gelfand formula} tells us that $r(A)=\lim_{n \to \infty} \|A^n\|^{1/n}$. Finally, we note that by \emph{Weyl's theorem} the essential spectra of $A$ and $A+K$ coincide if $K$ is a compact operator. As a general reference for the results mentioned above we refer to \cite{MR1130394}.

In the following subsections we discuss further topics relevant to this paper in a little more detail. As references for this material see, e.g., \cite{MR1098497, MR1863710,MR917067}.  
\subsection{Lorentz sequence spaces}

Let $x=(x_n)_{n=1}^N$, where $N \in \mn \cup \{\infty\},$ denote a complex-valued sequence. We assume that $x \in c_0(\mn)$ (the null sequences) if $N=\infty$. By  $x^*=(x_n^*)_{n=1}^N$ we denote the \emph{decreasing rearrangement} of $x$, i.e. 
$$ x_1^*:= \max_n |x_n|, \quad  x_2^*:= \max_{n \neq m}(|x_n|+|x_m|) - x_1^*, \quad \ldots$$ 
so $x_1^* \geq x_2^* \geq \ldots \geq 0$ and the sets of $x_n^*$ and $|x_n|$ are identical, counting multiplicities. For $0 < p < \infty, 0 < q \leq \infty$ we define
\begin{equation*}
  \|x\|_{p,q}:= \left\{
    \begin{array}{cl}
      \left( \sum\limits_{n=1}^N x_n^{*q} n^{\frac q p  -1} \right)^{\frac 1 q}, & 0 < q < \infty, \\[10pt]
      \sup\limits_{n=1}^N x_n^*n^{\frac 1 p}, & q = \infty,
    \end{array}\right.
\end{equation*}
using the convention that $\infty^{\frac 1 q}=\infty$.
\begin{rem}
  It is not difficult to check that
  \begin{equation}
    \label{eq:1}
\|x\|_{p,\infty}^p= \sup_{r>0} \left( r^p \cdot \#\{n:x_n^* >r\} \right).    
  \end{equation}
\end{rem}
\noindent The \emph{Lorentz sequence spaces} $l_{p,q}= l_{p,q}(\mn)$ are then defined by
\begin{equation*}
  l_{p,q} := \{ x \in c_0(\mn) : \|x\|_{p,q} < \infty\}.
\end{equation*}
We note that $\|.\|_{p,q}$ defines a quasi-norm on $l_{p,q}$ and that $(l_{p,q},\|.\|_{p,q})$ is a quasi-Banach space. In particular, the space $l_{p,p}$ is just the usual space $l_p$ with $\|.\|_{p,p}=\|.\|_p$. We also remark that for $q=\infty$ the space $l_{p,\infty}$ is called \emph{weak} $l_p$-\emph{space}. Finally, we note that the Lorentz sequence spaces are lexicographically ordered, i.e.
\begin{equation}
  \label{eq:8}
  l_{p_1,q_1} \subsetneq l_{p_2,q_2} \text{ if } p_1 < p_2 \text{ or if } p_1=p_2 \text{ and } q_1<q_2.
\end{equation}

\subsection{$s$-number ideals}
Let $\bdd:=\cup_{X,Y} \bdd(X,Y)$ denote the class of all bounded linear operators between Banach spaces. Then a map $s : \bdd \to l_\infty$, assigning to any operator $T \in \bdd$ a bounded sequence $(s_n(T))_{n=1}^\infty$, is called \emph{$s$-number function} if the following holds for all Banach spaces $X_0,X,Y_0,Y$:
\begin{enumerate}
    \item[(s1)] $\|T\|=s_1(T) \geq s_2(T) \geq \ldots \geq 0,\text{ for all } T \in \bdd(X,Y)$,
    \item[(s2)] $s_{n+m-1}(S+T) \leq s_n(S) + s_m(T),\text{ for all } S,T \in \bdd(X,Y), m,n \in \mn,$
    \item[(s3)] $s_n(RST) \leq \|R\| s_n(S) \|T\|$ \\ for all  $R \in \bdd(Y,Y_0),S \in \bdd(X,Y),T \in \bdd(X_0,X), n \in \mn,$
    \item[(s4)] $s_n(T)=0$ if $\operatorname{rank}(T) < n$ and $s_n(\id_{l_2^n})=1$, where $\id_{l_2^n}$ denotes the identity operator on $l_2(\{1,\ldots,n\})$.
\end{enumerate}
The sequence $(s_n(T))_{n=1}^\infty$ is the \emph{sequence of $s$-numbers} of $T$. If only properties (s1)-(s3) are satisfied, then $s$ is called a \emph{pseudo-$s$-number} function (and $(s_n(T))_{n=1}^\infty$ a pseudo-$s$-number sequence). The (pseudo-) $s$-number function $s : \bdd \to l_\infty$ is called \emph{multiplicative} if
\begin{itemize}
    \item[(s5)] $ s_{n+m-1}(ST) \leq s_n(S)s_m(T), \text{ for all } S \in \bdd(X,Y), T \in \bdd(X_0,X), m,n \in \mn.$
\end{itemize}

An example of a multiplicative $s$-number sequence is given by the \emph{approximation numbers}
\begin{equation*} 
 a_n(T):= \inf\{ \|T-F\| : F \in \bdd(X,Y) \text{ with } \operatorname{rank}(F) < n\}, \quad T \in \bdd(X,Y).
\end{equation*}
On Hilbert spaces, this is actually the only $s$-number sequence, i.e. for $T \in \bdd(\hil_1,\hil_2)$ any $s$-number sequence $(s_n(T))_{n=1}^\infty$ coincides with $(a_n(T))_{n=1}^\infty$ (and for compact Hilbert space operators, the sequence $(a_n(T))_{n=1}^\infty$ in turn coincides with the sequence of singular numbers of $T$). On general Banach spaces, however, there are $s$-number sequences different from the approximation numbers, like, e.g., the Weyl-, Gelfand- or Hilbert-number sequences, see \cite{MR917067} for their definitions.

An example of a multiplicative pseudo-$s$-number sequence is provided by the sequence of \emph{entropy numbers} of $T \in \bdd(X,Y)$, defined as
$$ e_n(T) := \inf\left\{ \eps > 0 : \exists y_1,\ldots,y_q \in Y, q \leq 2^{n-1}, \text{s.t. } T(B_X) \subset \cup_{i=1}^q ( \{y_i\} + \eps B_Y)\right\},$$
where $B_X, B_Y$ denote the closed unit balls in $X,Y$. Note that $T$ is compact iff $e_n(T) \to 0$ for $n \to \infty$.

Given any (pseudo-) $s$-number function $s: \bdd \to l_\infty$ and $0 < p < \infty, 0 < q \leq \infty$ we define
$$ \mathcal S_{p,q}^{(s)}(X,Y):= \left\{ T \in \bdd(X,Y) : (s_n(T))_{n=1}^\infty \in l_{p,q} \right\}.$$
This is a linear space which, when equipped with the quasi norm
$$ \|T\|_{p,q}^{(s)}:= \|(s_n(T))\|_{p,q},$$
is a quasi-Banach space. Moreover, it is a Banach \emph{ideal}, meaning that $RST \in \mathcal S_{p,q}^{(s)}(X_0,Y_0)$ if $S \in \mathcal S_{p,q}^{(s)}(X,Y), R \in \bdd(Y,Y_0)$ and $T \in \bdd(X_0,X)$, and in this case $\|RST\|_{p,q}^{(s)} \leq \|R\| \|S\|_{p,q}^{(s)}\|T\|$. The ideals $S_{p,q}^ {(s)}(X,Y)$ are called \emph{$s$-number ideals}. As usual we set $S_{p,q}^ {(s)}(X,X)=:S_{p,q}^ {(s)}(X)$.  Note that from (\ref{eq:8}) we obtain that
\begin{equation}
  \label{eq:10}
    \mathcal S_{p_1,q_1}^{(s)}(X,Y) \subset \mathcal S_{p_2,q_2}^{(s)}(X,Y) \text{ if } p_1 < p_2 \text{ or if } p_1=p_2 \text{ and } q_1<q_2,
\end{equation}
i.e. the $s$-number ideals are lexicographically ordered.
\begin{convention}\label{conv:1}
Since only the approximation number ideals $\mathcal S_{p,q}^ {(a)}$ and entropy-number ideals $\mathcal S_{p,q}^{(e)}$ will play a further role in this article, and since later we will use powers of the corresponding norms (which looks rather awkward with the above standard notation), throughout this article we will set  
\begin{equation*}
  \|K\|_{p,q}^{(e)}=:\|K\|_{p,q}, \qquad \|K\|_{p,q}^{(a)} =: \VERT{K}_{p,q}.
\end{equation*}  
\end{convention}
Concerning the relation between the entropy and approximation number ideals (which both consist of compact operators only)  we note that
\begin{equation*}
  \mathcal S_{p,q}^{(a)}(X,Y) \subset S_{p,q}^{(e)}(X,Y)
\end{equation*}
and that there exists a constant $C({p,q}) \geq 1$ such that for all Banach spaces $X,Y$ and all $T \in \mathcal S_{p,q}^{(a)}(X,Y)$ we have
\begin{equation}
  \label{eq:12}
  \| T\|_{p,q} \leq C({p,q}) \VERT{T}_{p,q}.
\end{equation}
If $X= \hil$ is a Hilbert space, then $S_{p,q}^{(a)}(\hil)=S_{p,q}^{(e)}(\hil)$ (however, be aware that $e_n(T)=a_n(T)$ need certainly not be true).

\section{Eigenvalues of compactly perturbed operators}

Throughout this section we consider a bounded operator $A \in \bdd(X)$ and a compact operator $K$ on $X$. We are interested in the discrete eigenvalues of the perturbed operator $A+K$ \emph{outside} the closed disk $\overline\md_{r_{ess}}=\{ \lambda \in \mc : |\lambda| \leq r_{ess}(A)\}$. \begin{convention}\label{conv2}
In the entire section we will denote the discrete eigenvalues of $A+K$ outside $\overline{\md}_{r_{ess}}$ in decreasing order  by $|\lambda_1| \geq |\lambda_2| \geq \ldots > r_{ess}(A)$ (and with the convention that  $\lambda_{m+1}=\lambda_{m+2}=\ldots=r_{ess}(A)$ if there are only $m$ such eigenvalues). Here each eigenvalue is counted as many times as its algebraic multiplicity. 
\end{convention}

 We set
 $$n_{A+K}(s):= \# \{ n : |\lambda_n| > s\}, \qquad s > r_{ess}(A).$$ 

In order to present our first result, for $p>0$ we need to introduce a function $\Phi_p: (0,1) \to \mr$ by
\begin{equation}
  \label{eq:17}
  \Phi_p(x)= \frac{\left( W\left(\frac 1 p e^{\frac 1 p} x \right) \right)^p}{\left(\frac 1 p - W\left(\frac 1 p e^{\frac 1 p} x \right)\right)^{p+1}x^p},  \qquad x \in (0,1),
\end{equation}
where $W:[0,\infty) \to [0,\infty)$ is the Lambert $W$-function defined by $W(x)e^{W(x)}=x$. Since $W(x) \sim x$ for $x \to 0$, we can continuously extend $\Phi_p$ to $[0,1)$ by setting $\Phi_p(0)=p\cdot e$. As has been shown in \cite{MR3296588} (see the proof of Corollary 4.3 in that article), we also have 
\begin{equation}
  \label{eq:18}
  \Phi_p(x) \leq \frac {(p+1)^{p+1}}{p^p} \frac 1 {(1-x)^{p+1}}, \qquad x \in [0,1).
\end{equation}
\begin{thm}\label{thm:1}
  Let $A \in \bdd(X)$ and $K \in \mathcal S_{p,\infty}^{(e)}(X), p>0$. Then
  \begin{equation}
    \label{eq:15}
 n_{A+K}(s) \leq  \frac{\ln(2)} 2 \frac{\Phi_p\left(\frac{\|A\|}{s}\right)}{s^p} \|K\|_{{p,\infty}}^p, \qquad s>\|A\|.
  \end{equation}
In particular,
\begin{equation}
  \label{eq:16}
 n_{A+K}(s) \leq C_p  \frac{s} {(s-\|A\|)^{p+1}} \|K\|_{{p,\infty}}^p, \qquad s>\|A\|,
\end{equation}
where $ C_p:= (\ln(2)(p+1)^{p+1})/(2p^p)$.
\end{thm}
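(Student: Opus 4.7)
The plan is to pass to the finite-dimensional $(A+K)$-invariant subspace associated with the eigenvalues we are counting, and there apply Carl's multiplicative inequality between eigenvalues and entropy numbers of a compact operator. More precisely, I would set $n := n_{A+K}(s)$ and let $M \subset X$ be the spectral invariant subspace of $A+K$ of dimension $n$ on which $A+K$ has spectrum $\{\lambda_1,\ldots,\lambda_n\}$. Then $T := (A+K)|_M$ is of finite rank, hence compact, with eigenvalues $\lambda_1,\ldots,\lambda_n$, all of modulus $>s$, and Carl's inequality \cite[Theorem 3]{MR619953} in its multiplicative form gives
\begin{equation*}
\left(\prod_{k=1}^n |\lambda_k(T)|\right)^{1/n} \leq \sqrt{2}\, e_n(T),
\end{equation*}
which combined with $|\lambda_k(T)|>s$ for every $k\leq n$ yields $s^n < (\sqrt{2})^n \prod_{k=1}^n e_k(T)$.

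The next step is to replace the entropy numbers of $T$ by those of $K$. Writing $T = A|_M + K|_M$ as a sum of operators factored through the inclusion $j_M\colon M \hookrightarrow X$ and invoking the subadditivity of entropy numbers together with the ideal property, I would obtain a bound of the form $e_k(T) \leq c\,(\|A\| + e_k(K))$ for a universal constant $c$. Substituting the weak-$l_p$ estimate $e_k(K) \leq \|K\|_{p,\infty}\, k^{-1/p}$ then turns the preceding display into
\begin{equation*}
\left(\frac{s}{c\sqrt{2}}\right)^n < \prod_{k=1}^n \!\left(\|A\| + \|K\|_{p,\infty}\, k^{-1/p}\right).
\end{equation*}
Taking logarithms, approximating $\sum_{k=1}^n \log(\|A\| + \|K\|_{p,\infty} k^{-1/p})$ by the integral $\int_0^n \log(\|A\| + \|K\|_{p,\infty} t^{-1/p})\, dt$, and then performing the change of variables $u = (\|K\|_{p,\infty}/\|A\|)\, t^{-1/p}$ produces an implicit inequality in $n$ whose extremal solution is precisely the bound defining $\Phi_p$; the Lambert $W$-function appears here because the optimality condition takes the form $w\,e^w = \text{const}$. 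This yields (\ref{eq:15}), and (\ref{eq:16}) then follows via (\ref{eq:18}).

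I expect the subtle point to be the careful tracking of constants in the entropy-number estimate $e_k(T) \leq c(\|A\|+e_k(K))$, since in a general Banach space the finite-dimensional subspace $M$ is complemented only with possibly large projection norm; the specific appearance of $\ln(2)/2$ in $C_p$ reflects how the various factors of $2$ are absorbed when one combines the multiplicative form of Carl's inequality with the integral optimisation described above.
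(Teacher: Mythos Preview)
Your reduction to the invariant subspace $M$ creates a gap that you correctly flag but then underestimate. Writing $T=(A+K)|_M$ as an operator on $M$ requires the Riesz projection $P$ onto $M$: concretely $T=P(A+K)j_M$, so $e_k(T)\le \|P\|\big(e_k(K)+\|A\|\big)$, and the constant $c$ in your estimate is $c=\|P\|$. This is \emph{not} a universal constant --- the norm of a Riesz projection depends on the spectral configuration (how the eigenvalues $\lambda_1,\dots,\lambda_n$ sit relative to the rest of $\sigma(A+K)$) and can be arbitrarily large. So the argument, as written, cannot yield the explicit constant $\ln(2)/2$ nor even any constant depending on $p$ alone. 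A secondary problem is that even with $c=1$, combining your inequality $s^n<(\sqrt{2}\,c)^n\prod_k(\|A\|+\|K\|_{p,\infty}k^{-1/p})$ with the asymptotics $\sum_{k\le n}\log(\|A\|+\|K\|_{p,\infty}k^{-1/p})=n\log\|A\|+o(n)$ forces $s>\sqrt{2}\,c\,\|A\|$ for the bound to bite at all, so you do not cover the whole range $s>\|A\|$.

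The paper sidesteps both issues by avoiding the restriction to $M$ entirely. The Carl inequality in its individual form, extended by Zem\'anek to \emph{arbitrary} bounded operators, gives
\[
|\lambda_n|\le 2^{(m-1)/(2n)}\,e_m(A+K)
\]
for all $m,n\in\mn$, applied directly to the non-compact operator $A+K$. Then the plain subadditivity (s2) yields $e_m(A+K)\le e_m(K)+\|A\|$ with constant $1$ --- no projection needed. Plugging in $e_m(K)\le m^{-1/p}\|K\|_{p,\infty}$ and choosing $m\approx 2n\ln(x)/\ln 2$ for a free parameter $x>1$ gives $|\lambda_n|\le x\big((\ln 2/(2n\ln x))^{1/p}\|K\|_{p,\infty}+\|A\|\big)$; optimising over $x\in(1,s/\|A\|)$ then produces $\Phi_p(\|A\|/s)$ and the stated constants. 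The freedom to take $m$ much larger than $n$ (equivalently, $x$ close to $1$) is precisely what lets the bound reach down to every $s>\|A\|$.
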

As explained in the introduction, the previous theorem improves upon results in \cite{Hansmann15} and \cite{MR3296588}. We make this more precise in the following remark.
\begin{rem}
  In \cite[Corollary 6.6]{Hansmann15} it was proven that, with a (more or less explicit) constant $C_p^{(1)}$, it holds that 
  \begin{equation}
    \label{eq:11}
 n_{A+K}(s) \leq {C}_p^{(1)}  \frac{s} {(s-\|A\|)^{p+1}} \|K\|_{p}^p, \qquad s>\|A\|,    
  \end{equation}
if $K \in \mathcal S_p^{(e)}(X)$ is a $\|.\|_p$-limit of finite rank operators. Since $C_p \leq {C}_p^{(1)}$ (as follows from results in \cite{Han17a}) and $\|K\|_{p,\infty} \leq \|K\|_p$ for $K \in \mathcal S_p^{(e)}(X)$, we see that even for perturbations in the smaller class $\mathcal  S_p^{(e)}(X) \subset \mathcal S_{p,\infty}^{(e)}(X)$ estimate (\ref{eq:16}) is stronger than (\ref{eq:11}). Moreover, for perturbations $K$ from the still smaller class $\mathcal S_p^{(a)}(X) \subset \mathcal S_p^{(e)}(X)$ it was proven in \cite[Theorem 4.2]{MR3296588} that, with a constant $C_p^{(2)}$, 
\begin{equation}
  \label{eq:14}
  n_{A+K}(s) \leq  C_p^{(2)} \frac{\Phi_p\left(\frac{\|A\|}{s}\right)}{s^p} \VERT{K}_{p}^p, \qquad s>\|A\|.
\end{equation}
Note that, in view of (\ref{eq:12}) and the fact that $\VERT{K}_{p,\infty} \leq \VERT{K}_{p}$, estimate (\ref{eq:15}) implies that
\begin{equation}
  \label{eq:22}
  n_{A+K}(s) \leq  C^p(p,\infty) \frac{\ln(2)}{2}  \frac{\Phi_p\left(\frac{\|A\|}{s}\right)}{s^p} \VERT{K}_{p}^p, \qquad s>\|A\|.
\end{equation}
Since the optimal values of the constants $C(p,\infty)$ in (\ref{eq:12}) seem to be unknown, we cannot say whether (or for what $p$) the constant appearing in (\ref{eq:22}) is smaller/larger than the constant in (\ref{eq:14}). 
\end{rem}

Our main tool in the proof of Theorem \ref{thm:1} will be the following result, which has first been obtained by Carl \cite{MR619953} for compact operators. Zem{\'a}nek  \cite{MR751086} later observed that it remains correct for general bounded operators as well. 

\begin{thm*}[Carl \cite{MR619953}, Zem{\'a}nek \cite{MR751086}]
  Let the discrete eigenvalues $(\lambda_n)_{n=1}^\infty$ of $A+K$ outside $\overline\md_{r_{ess}}$ be enumerated as described at the beginning of this section. Then  
\begin{equation}
  \label{eq:13}
  |\lambda_n| \leq 2^{\frac{m-1}{2n}} e_m(A+K)
\end{equation}
for all $n,m \in \mn$.
\end{thm*}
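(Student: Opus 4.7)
\emph{Overall plan.} My approach is to bound the product $|\lambda_1 \lambda_2 \cdots \lambda_n|$ from above by a power of an entropy number via a finite-dimensional reduction, and then to extract the desired single-eigenvalue bound using $|\lambda_n|^n \leq |\lambda_1 \cdots \lambda_n|$, which holds because the eigenvalues are enumerated in decreasing order of modulus.

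\emph{Finite-dimensional reduction and a volume--determinant inequality.} Fix $n\in\mn$ and write $T := A+K$. By Fredholm theory, each eigenvalue $\lambda_k$ ($k\leq n$) in $\sigma_d(T)\setminus\overline\md_{r_{ess}}$ carries a Riesz projection with finite-dimensional range equal to its generalized eigenspace; the sum of these ranges is a $T$-invariant subspace $M\subset X$ with $\dim M=n$, and $T_M:=T|_M$ has spectrum $\{\lambda_1,\dots,\lambda_n\}$ counted with algebraic multiplicities. Hence $|\det T_M|=\prod_{k=1}^n|\lambda_k|$ and $|\lambda_n|^n\leq|\det T_M|$. Viewing the complex space $M$ as a real $2n$-dimensional space equipped with some Euclidean structure, one has $\vol_{2n}(T_M(B_M))=|\det T_M|^2\,\vol_{2n}(B_M)$ (the real Jacobian of a complex linear map equals the squared modulus of its complex determinant). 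On the other hand, by the very definition of the entropy number, $T_M(B_M)$ can be covered by $2^{m-1}$ balls of radius arbitrarily close to $e_m(T_M)$, giving $\vol_{2n}(T_M(B_M))\leq 2^{m-1}\,e_m(T_M)^{2n}\,\vol_{2n}(B_M)$. Combining and taking square roots:
\[
|\det T_M|\leq 2^{(m-1)/2}\,e_m(T_M)^n.
\]

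\emph{Comparison of entropy numbers, conclusion, and the main obstacle.} It remains to replace $e_m(T_M)$ by $e_m(T)$. Writing $T_M=P_M\,T\,i_M$ for the inclusion $i_M:M\hookrightarrow X$ and a projection $P_M:X\to M$ onto the invariant subspace, the ideal property (s3) of entropy numbers yields $e_m(T_M)\leq\|P_M\|\,e_m(T)$; alternatively, a direct covering argument that transfers a cover of $T(B_X)\supset T_M(B_M)$ into $M$ by translating each center into $M$ achieves the same goal. Once $e_m(T_M)\leq e_m(A+K)$ is available, combining the three estimates and taking $n$-th roots yields $|\lambda_n|\leq 2^{(m-1)/(2n)}e_m(A+K)$. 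The hard part, I expect, is obtaining this comparison with the \emph{sharp} constant: the naive projection argument loses a factor depending on the projection constant of $M\subset X$ (bounded for example by $\sqrt n$ via Kadec--Snobar), and the center-translation argument naively loses a factor of $2$. Eliminating these losses requires a more careful geometric choice, such as an Auerbach basis on $M$ or a judicious choice of Euclidean structure in the volume step. Finally, Zem\'anek's extension from compact operators to operators of the form $A+K$ is essentially automatic given this organization, since compactness was used only to guarantee finite-dimensional Riesz projections, and these exist for the discrete eigenvalues of any bounded operator outside $\overline\md_{r_{ess}}$ by Fredholm theory.
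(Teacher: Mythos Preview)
The paper does not give a proof of this theorem at all; it is quoted as a known result of Carl (for compact operators) and Zem\'anek (for the extension to bounded operators), and then used as a black box in the proof of Theorem~\ref{thm:1}. So there is no ``paper's own proof'' to compare with. That said, your outline is exactly Carl's original argument: pass to the $T$-invariant $n$-dimensional subspace $M$ spanned by the first $n$ generalized eigenspaces, compare the $2n$-dimensional Lebesgue volume of $T_M(B_M)$ with that of a covering, and extract the determinant bound. Your remark on Zem\'anek's extension is also correct: nothing in the argument uses compactness of $T$, only the existence of finite-dimensional Riesz projections for discrete eigenvalues.

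The point you flag as ``the hard part'' is genuine, and your two proposed fixes (bounded projection, or center-translation with a triangle-inequality loss of $2$) do indeed fail to give the sharp constant. The correct resolution is \emph{not} to pass through $e_m(T_M)$ at all. Instead, cover $T(B_X)$ in $X$ by $2^{m-1}$ translates $y_i+\eps B_X$ with $\eps$ close to $e_m(T)$; since $T_M(B_M)\subset T(B_X)\cap M$, it suffices to bound $\vol_M\big((y_i+\eps B_X)\cap M\big)$. Here one uses that $B_X$ is a \emph{symmetric} convex body: by Brunn's theorem, for parallel affine slices of a convex body the slice volume (to the power $1/\dim M$) is concave, and symmetry makes it even, so the central slice $\eps B_X\cap M=\eps B_M$ has the largest volume. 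Thus each piece contributes at most $\vol_M(\eps B_M)=\eps^{2n}\vol_M(B_M)$, and one obtains directly
\[
|\det T_M|^2\,\vol_M(B_M)=\vol_M\big(T_M(B_M)\big)\le 2^{m-1}\eps^{2n}\vol_M(B_M),
\]
hence $|\lambda_n|^n\le|\det T_M|\le 2^{(m-1)/2}e_m(A+K)^n$ with no loss of constants. Note also that the volume identity $\vol_M(T_M(B_M))=|\det_{\mc}T_M|^2\vol_M(B_M)$ does not require any Euclidean structure on $M$; it is just the change-of-variables formula for Lebesgue measure on the real $2n$-dimensional space $M$, and $B_M$ is the (generally non-Euclidean) unit ball inherited from $X$.
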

Since we emphasized in the introduction that our proof of Theorem \ref{thm:1} does not rely on tools from complex analysis, we should say that also the proof of (\ref{eq:13}) does not rely on such tools. 
\begin{proof}[Proof of Theorem \ref{thm:1}] 
From Carl's inequality (\ref{eq:13}) and the additivity of the entropy numbers (i.e. property (s2) with $n=1$)  we obtain that for all $n,m \in \mn$
$$ |\lambda_n| \leq 2^{\frac{m-1}{2n}} e_m(A+K) \leq 2^{\frac{m-1}{2n}} \left( e_m(K) + \|A\| \right) = 2^{\frac{m-1}{2n}} \left( m^{-\frac 1 p} m^{\frac 1 p} e_m(K) + \|A\| \right).$$
Using the definition of $\|K\|_{p,\infty}$ we thus obtain that 
\begin{equation}
  \label{eq:6}
|\lambda_n| \leq 2^{\frac{m-1}{2n}} \left( m^{-\frac 1 p} \|K\|_{p,\infty}  + \|A\| \right), \qquad n,m \in \mn.  
\end{equation}
To simplify matters, we now introduce a free variable $x>1$ and try to choose $m \in \mn$ such that $2^{\frac{m-1}{2n}}=x$. This equality would be satisfied iff $m=\frac{2n\ln(x)}{\ln(2)} +1$. However, since we don't want to restrict the variable $x$ here the right-hand side need not be an integer. For that reason, we choose instead
$$ m = \left\lceil \frac{2n\ln(x)}{\ln(2)} \right\rceil, \quad \text{i.e. } \quad m \in \mn \text{ and } \frac{2n\ln(x)}{\ln(2)} \leq m < \frac{2n\ln(x)}{\ln(2)} +1.$$
For this choice of $m$ we then obtain from (\ref{eq:6}) that 
\begin{eqnarray*}
  |\lambda_n| &\leq& 2^{\frac{m-1}{2n}} \left( m^{-\frac 1 p} \|K\|_{p,\infty}  + \|A\| \right) 
\leq x \left( \left( \frac{\ln(2)}{2n\ln(x)} \right)^{\frac 1 p} \|K\|_{p,\infty} + \|A\| \right)=:F_n.
\end{eqnarray*}
Now let $s > \|A\|$ and $1<x<s/\|A\|$. Then from the previous inequality we see that if $|\lambda_n|>s$, then also $F_n>s$. Moreover, $n \mapsto F_n$ is decreasing and a short computation shows that 
$$ F_n > s \quad \Leftrightarrow \quad n < \frac{\frac{\ln(2)}{2\ln(x)}\|K\|^p_{p,\infty}}{\left( \frac s x - \|A\|\right)^p}.$$
But this implies that
$$ n_{A+K}(s)=\#\{ n \in \mn : |\lambda_n|>s\} \leq \# \{n \in \mn : F_n > s\} \leq \frac{\frac{\ln(2)}{2\ln(x)}\|K\|^p_{p,\infty}}{\left( \frac s x - \|A\|\right)^p}.$$
Since this is true for all $1<x<\frac s {\|A\|}$ we thus obtain that
\begin{equation}
  \label{eq:20}
  n_{A+K}(s) \leq \frac{\ln(2)}{2} \left( \inf_{1<x<\frac s {\|A\|}} \frac{1}{\ln(x)\left( \frac s x - \|A\|\right)^p} \right) \|K\|_{p,\infty}^p.
\end{equation}
We arrive at (\ref{eq:15}) by computing the infimum in (\ref{eq:20}). We just refer to \cite{MR3296588} (see the computations prior to Theorem 4.2 in that paper) where this computation has been done.  Inequality (\ref{eq:16}) is an immediate consequence of (\ref{eq:15}) and estimate (\ref{eq:18}).
\end{proof}
\begin{rem}
  The idea to apply Carl's inequality (\ref{eq:13}) with an $m$ depending on $n$ has also been used in \cite{MR632627} in a non-perturbative setting.  
\end{rem}
In the following corollary we use the convention from the beginning of this section, regarding the eigenvalues $|\lambda_1| \geq |\lambda_2| \geq \ldots > r_{ess}(A)$ of $A+K$. However, by considering the positive part $(|\lambda_n|-\|A\|)_+:=\max(|\lambda_n|-\|A\|,0)$, we are effectively restricting ourselves to eigenvalues outside $\overline{\md}_{\|A\|}$.
\begin{cor}\label{cor:1}
  Let $0 \neq A \in \bdd(X)$ and $K \in \mathcal S_{p,\infty}^{(e)}(X), p>0$. Then 
  \begin{equation}
    \label{eq:9}
    \left\| \big( \left( |\lambda_n|-\|A\| \right)_+ \big) \right\|_{p+1,\infty}^{p+1}  \leq C_p r(A+K) \|K\|_{p,\infty}^p,
  \end{equation}
with $C_p$ as defined in Theorem \ref{thm:1}.
\end{cor}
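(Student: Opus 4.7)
The plan is to combine Theorem \ref{thm:1} with the distributional characterisation of the weak $l_{p+1}$-norm given in (\ref{eq:1}), cutting off the supremum at $s=r(A+K)$.

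First I would rewrite the left-hand side of (\ref{eq:9}) using (\ref{eq:1}). Since $(|\lambda_n|)$ is already arranged in decreasing order, the sequence $((|\lambda_n|-\|A\|)_+)$ is automatically its own decreasing rearrangement. Applying (\ref{eq:1}) with exponent $p+1$ and substituting $s=r+\|A\|$ gives
\begin{equation*}
\left\|\big((|\lambda_n|-\|A\|)_+\big)\right\|_{p+1,\infty}^{p+1}
= \sup_{s>\|A\|}(s-\|A\|)^{p+1}\,\#\{n:|\lambda_n|>s\}
= \sup_{s>\|A\|}(s-\|A\|)^{p+1}\, n_{A+K}(s).
\end{equation*}

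Next I would plug in the bound (\ref{eq:16}) from Theorem \ref{thm:1}: for every $s>\|A\|$,
\begin{equation*}
(s-\|A\|)^{p+1}\, n_{A+K}(s) \;\leq\; C_p\, s\, \|K\|_{p,\infty}^p.
\end{equation*}
The remaining task is to replace the factor $s$ by $r(A+K)$. This is where the key observation comes in: every discrete eigenvalue of $A+K$ lies in the spectrum, hence satisfies $|\lambda_n|\leq r(A+K)$. Consequently, as soon as $s\geq r(A+K)$ we have $n_{A+K}(s)=0$, so the supremum over $s>\|A\|$ is actually a supremum over $\|A\|<s<r(A+K)$. For such $s$ the bound above yields
\begin{equation*}
(s-\|A\|)^{p+1}\, n_{A+K}(s) \;\leq\; C_p\, r(A+K)\, \|K\|_{p,\infty}^p,
\end{equation*}
and taking the sup on the left gives (\ref{eq:9}).

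There is no real obstacle; the only point that deserves a line of comment is the (trivial) bookkeeping for the degenerate cases, namely $r(A+K)\leq\|A\|$, in which the set $\{s>\|A\|:n_{A+K}(s)>0\}$ is empty and both sides of (\ref{eq:9}) vanish, so the inequality holds automatically.
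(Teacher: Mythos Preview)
Your proof is correct and follows exactly the same approach as the paper's: rewrite the weak $l_{p+1}$-norm via (\ref{eq:1}) as $\sup_{\|A\|<s<r(A+K)} (s-\|A\|)^{p+1} n_{A+K}(s)$ and then apply (\ref{eq:16}). You have in fact spelled out more details than the paper does (the restriction of the supremum and the degenerate case), but there is no difference in substance.
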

\begin{proof}
Using (\ref{eq:1}) we see that
\begin{equation}
  \label{eq:30}
\left\| \big( \left( |\lambda_n|-\|A\| \right)_+ \big) \right\|_{p+1,\infty}^{p+1}= \sup_{r(A+K)>s>\|A\|} \left( (s-\|A\|)^{p+1} n_{A+K}(s) \right).  
\end{equation} 
Now apply estimate (\ref{eq:16}).
\end{proof}
As we have already discussed in the introduction, it is possible that (\ref{eq:9}) is not optimal and that the index $p+1$ on the left-hand side can be replaced by an index $q \in [p,p+1)$. However, the next example shows that even for a rank one operator $K$ the $l_{q,\infty}$-norm of $\big(\left( |\lambda_n|-\|A\| \right)_+\big)_{n \in \mn}$ need not be finite when $q<1$, in sharp contrast to what happens in the compact case (i.e. $A=0$), where this sequence would be in $l_{q,\infty}$ for any $q>0$ (because the entropy numbers of a finite rank operator decay exponentially, see \cite{MR1098497}). 
\begin{ex}\label{ex:1}
  As has been proven in \cite[Example 5.2]{MR3296588}, if $A \in \bdd(l_1(\mn))$ denotes the shift operator then there exists $K \in \bdd(l_1(\mn))$ of rank one such that
$ \big(\left( |\lambda_n|-\|A\| \right)_+ \big)_{n \in \mn}$ is not in $l_1$. In particular, for $q<1$ also $\big( (|\lambda_n|-\|A\|)_+ \big)_{n \in \mn} \notin l_{q,\infty}$ since in this case $l_{q,\infty} \subset l_1$.
\end{ex}
It is an open question if for $K$ of finite rank the sequence $\big( \left( |\lambda_n|-\|A\| \right)_+ \big)_{n \in \mn}$ is always in $l_{1,\infty}$ (Corollary \ref{cor:1} only implies that this sequence is in $l_{p+1,\infty}$ for every $p>0$).\\

Estimate (\ref{eq:16}) has the defect that the right-hand side explodes for $s \to \|A\|$ even in case that $\|A\| > r(A)$ when the left-hand side obviously stays bounded for $s \to \|A\|$. One might hope that an improved estimate of the form
\begin{equation}
  \label{eq:31}
n_{A+K}(s) \leq \tilde C_p  \frac{s} {(s-r(A))^{p+1}} \|K\|_{{p,\infty}}^p, \qquad s>r(A),  
\end{equation}
is true, which does not show this defect. However, the next example shows that, at least in this form, this is not the case.
\begin{ex}
We show that (\ref{eq:31}) does not even hold when the $\mathcal S_{p,\infty}^{(e)}$-norm is replaced by the $\mathcal S_{p,\infty}^{(a)}$-norm (compare (\ref{eq:12})). To this end, let $\hil=\mc^N$ and let 
 $$ A = \left(
  \begin{array}{cccc}
    0 & 1 & &   0\\
      & 0 & \ddots &   \\
      &   & \ddots & 1   \\
    0  &   &        & 0  
  \end{array}\right), \quad 
K =  \left(
  \begin{array}{ccccc}
    0 & 0  & \cdots  & 0 \\
    \vdots  & \vdots    & & \vdots\\
     0 & 0  & \cdots   & 0\\
    1  & 0  & \cdots  & 0 
  \end{array}\right). $$
Then $r(A)=0$ and $\VERT{K}_{p,\infty}^p=1$ for all $p > 0$. Moreover, $A+K$ has $N$ distinct eigenvalues on the unit circle $\partial \md$. Hence, for a fixed $0<s < 1$ we would obtain from (\ref{eq:31}) that
$$ N =n_{A+K}(s) \leq \tilde C_p s^{-p}.$$
Choosing $N$ large enough leads to a contradiction.
\end{ex}

Despite of the previous example we will see that there does indeed exist a way to extend Theorem \ref{thm:1} to an estimate for eigenvalues outside $\overline\md_{r(A)}$. In order to state this extension, we need to introduce the following quantity:
\begin{equation}
  \label{eq:19}
  M_A(r):= \sup_{n \in \mn_0} \frac{\|A^n\|}{r^n}, \qquad r > r(A).
\end{equation}
Some remarks concerning this definition are in order.
\begin{rem}\label{rem:2}
(i) From Gelfand's formula it follows that 
$$1 \leq M_A(r) < \infty, \qquad r>r(A).$$
(ii) $M_A(r)=1$ if $r \geq \|A\|$. In particular, $ M_A \equiv 1$ if $r(A)=\|A\|$.\\
(iii) On the other hand, $M_A(r)$ can grow pretty fast for $r \to r(A)$. To see this, we consider $X=L_p(0,1), 1 \leq p < \infty,$ and the Volterra operator
$$ (Af)(x)=\int_0^x f(t) dt.$$
Then $r(A)=0$ and it was shown in \cite{MR2186094} that there exists $c(p)>0$ such that
$$ \lim_{n \to \infty} n! \|A^n\| = c(p).$$
This shows that 
$$ M_A(r) \approx \sup_{n} \frac 1 {r^n n!}, \quad r>0,$$
so $M_A(r)$ grows faster than any polynomial in $1/r$ for $r \to 0$.

(iv) Since the resolvent norm of $A$ might be easier to compute than the norm of the powers of $A$, the following upper bound might be useful:
  \begin{equation*}
    M_A(r) \leq r \cdot \sup_{|z|=r} \|(z-A)^{-1}\|, \quad r > r(A).
  \end{equation*}
This estimate follows at once from the representation $A^n= \frac 1 {2\pi i} \int_{\partial \md_r} z^n(z-A)^{-1} dz$. 
\end{rem}
The following theorem could be considered the main result of this article. 
\begin{thm}\label{thm:2}
  Let $A \in \bdd(X)$ and $K \in \mathcal S_{p,\infty}^{(e)}(X), p>0$. Then
\begin{equation}\label{eq:x}
 n_{A+K}(s) \leq C_p  \left( \inf_{r(A) < r < s} \frac{ sM_A^p(r)} {(s-r)^{p+1}} \right) \|K\|_{{p,\infty}}^p, \qquad s>r(A).
\end{equation}
In particular,
\begin{equation}\label{eq:y}
 n_{A+K}(s) \leq 2^{p+1} C_p \frac{ s M_A^p(\frac{s+r(A)}{2})} {(s-r(A))^{p+1}} \|K\|_{{p,\infty}}^p, \qquad s>r(A).
\end{equation}
In both cases $C_p$ is as defined in Theorem \ref{thm:1}. 
\end{thm}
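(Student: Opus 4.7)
The plan is to use a renorming trick (essentially the Banach-space analog of Rota's similarity model): for each $r > r(A)$, I would introduce an equivalent norm on $X$ in which $A$ has operator norm at most $r$, and then apply Theorem~\ref{thm:1} in this new norm.

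Concretely, for fixed $r > r(A)$, I would set
$$\|x\|_r := \sup_{n \geq 0} \frac{\|A^n x\|}{r^n}, \qquad x \in X.$$
By Gelfand's formula this supremum is finite, and one immediately checks the two-sided bound $\|x\| \leq \|x\|_r \leq M_A(r)\|x\|$, so $(X, \|\cdot\|_r) =: X_r$ is a Banach space with the same underlying vector space and the same notions of spectrum, discrete eigenvalues, and algebraic multiplicity as $(X, \|\cdot\|)$. A short calculation gives
$$\|Ax\|_r = r \sup_{n \geq 1} \frac{\|A^n x\|}{r^n} \leq r \|x\|_r,$$
so $\|A\|_{\bdd(X_r)} \leq r$, which is the whole point of the renorming.

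Next I would compare the entropy numbers of $K$ in the two norms. Writing $J_1 : X \to X_r$ and $J_2 : X_r \to X$ for the two identity maps, the above bounds give $\|J_1\| \leq M_A(r)$ and $\|J_2\| \leq 1$, and viewing $K \in \bdd(X_r)$ as the composition $J_1 \circ K \circ J_2$, the ideal property of entropy numbers yields $e_n(K; X_r) \leq M_A(r) e_n(K; X)$ for all $n$, hence $K \in \mathcal S_{p,\infty}^{(e)}(X_r)$ with $\|K\|_{p,\infty, X_r} \leq M_A(r) \|K\|_{p,\infty}$. Since $n_{A+K}(s)$ is genuinely an invariant of the linear map $A+K$ (being defined via spectral data), it is the same computed in $X$ or $X_r$, and Theorem~\ref{thm:1} applied in $X_r$ yields, for any $s > r$,
$$n_{A+K}(s) \leq C_p \frac{s}{(s - \|A\|_{\bdd(X_r)})^{p+1}} (\|K\|_{p,\infty, X_r})^p \leq C_p \frac{s M_A(r)^p}{(s-r)^{p+1}} \|K\|_{p,\infty}^p.$$
Taking the infimum over $r \in (r(A), s)$ gives \eqref{eq:x}, and the specialization $r = (s + r(A))/2$, which gives $s - r = (s - r(A))/2$, yields \eqref{eq:y} with the stated factor $2^{p+1}$.

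The main conceptual obstacle is the renorming step itself: I must verify that the equivalent norm interacts correctly with the entropy-number ideal (so that the change of norm only introduces a multiplicative factor of $M_A(r)$) and that $n_{A+K}(s)$ is truly invariant under this change of norm. Both are straightforward once set up carefully, but the bookkeeping with the two identity maps $J_1, J_2$ is where the argument would need to be written down most precisely.
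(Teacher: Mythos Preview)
Your proposal is correct and follows essentially the same strategy as the paper: reduce to Theorem~\ref{thm:1} by arranging that $A$ has operator norm at most $r$, at the cost of inflating $\|K\|_{p,\infty}$ by the factor $M_A(r)$, and then take the infimum over $r\in(r(A),s)$. The paper implements this via Rota's embedding $Vx=(x,Tx,T^2x,\ldots)$ into $l_\infty(\mn;X)$ (with $T=A/r$) and conjugation by $V$, whereas your renorming $\|x\|_r=\sup_{n\ge 0}\|A^n x\|/r^n$ is exactly the pullback of the $l_\infty$-norm under this $V$, so the two arguments are the same computation viewed from slightly different angles; your version has the mild advantage of avoiding the auxiliary space altogether.
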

\begin{rem}\label{rem:3}  
We note that estimate (\ref{eq:16}) in Theorem \ref{thm:1} and estimate (\ref{eq:x}) in Theorem \ref{thm:2} are actually equivalent. Indeed, Remark \ref{rem:2} (ii) shows that (\ref{eq:16}) follows from (\ref{eq:x}) and in the proof of the theorem below we will prove the reverse implication.
\end{rem}
The idea of proof of Theorem \ref{thm:2} is simple: Replace the operators $A$ and $K$ by similar operators $VAV^{-1}$ and $VKV^{-1}$ (which does not change their spectra) in such a way that the norm of $VAV^{-1}$ is close to the spectral radius of $A$, and then apply Theorem \ref{thm:1}. That this can indeed be done is the content of the following lemma, which uses a construction due to Rota \cite{MR0112040}.
\begin{lemma}   
 Let $A \in \bdd(X)$. Then 
for every $r > r(A)$ there exists a boundedly invertible operator $V_r \in \bdd(X,l_\infty(\mn;X))$ such that 
\begin{equation}
  \label{eq:7}
 \|V_r\|\cdot \|V_r^{-1}\| \leq M_A(r) \quad \text{ and } \quad \|V_r A V_r^{-1}\| \leq r.  
\end{equation}
In particular, we have
\begin{equation}
  \label{eq:32} 
r(A) = \inf\{ \|V AV^{-1}\| : V \in \bdd(X,l_\infty(\mn,X)) \text{ is boundedly invertible}\}.  
\end{equation}
\end{lemma}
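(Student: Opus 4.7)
The plan is to apply Rota's classical construction, which realizes any bounded operator, up to similarity, as (the restriction of) the backward shift on a suitable $l_\infty$-valued sequence space. Fix $r > r(A)$ and set $B := A/r$; then by the very definition (\ref{eq:19}) of $M_A(r)$, one has $\|B^n\| \leq M_A(r)$ for every $n \in \mn_0$. Define
\[ V_r x := (x, Bx, B^2 x, B^3 x, \ldots) \in l_\infty(\mn; X), \qquad x \in X. \]
From $\|V_r x\|_{l_\infty} = \sup_{n \geq 0} \|B^n x\|$, the upper bound yields $\|V_r\| \leq M_A(r)$, while extracting the first coordinate already shows $\|V_r x\| \geq \|x\|$. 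Hence $V_r$ is an isomorphism onto its (necessarily closed) range $V_r(X) \subset l_\infty(\mn; X)$, with $\|V_r^{-1}\| \leq 1$ on that range. This is the sense in which ``boundedly invertible'' should be read here, and it is exactly what is needed for the similarity argument in the proof of Theorem~\ref{thm:2}, where $A+K$ is viewed as an operator on the closed subspace $V_r(X)$.

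Next I would identify $V_r A V_r^{-1}$ explicitly. Let $S$ denote the backward shift on $l_\infty(\mn; X)$, $S(y_1, y_2, y_3, \ldots) := (y_2, y_3, y_4, \ldots)$; it has $\|S\| \leq 1$. A one-line unfolding of the definition yields $S V_r = V_r B$, and hence $V_r A V_r^{-1} = r \cdot V_r B V_r^{-1} = r\, S|_{V_r(X)}$ as an operator on $V_r(X)$. In particular $\|V_r A V_r^{-1}\| \leq r$, which together with $\|V_r\|\|V_r^{-1}\| \leq M_A(r)$ establishes (\ref{eq:7}).

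The variational formula (\ref{eq:32}) then follows at once: for any admissible $V$, similarity preserves the spectrum, so $r(A) = r(VAV^{-1}) \leq \|VAV^{-1}\|$, which gives the inequality $r(A) \leq \inf(\cdots)$; letting $r \searrow r(A)$ in the bound $\|V_r A V_r^{-1}\| \leq r$ gives the reverse inequality. No real analytic difficulty arises; the only point that has to be gotten right — and really the crux of the whole argument — is to arrange the Rota-type embedding by powers of $B = A/r$ so that the product $\|V_r\|\|V_r^{-1}\|$ is controlled by $M_A(r)$ rather than by a resolvent norm of $A$ near $\partial\md_r$. It is precisely this gain that allows the eigenvalue estimate (\ref{eq:y}) to be extracted from Theorem~\ref{thm:1} by applying it to the similar pair $(r\,S|_{V_r(X)},\, V_r K V_r^{-1})$.
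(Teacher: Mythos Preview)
Your proof is correct and follows essentially the same route as the paper: define $V_r x=(x,Bx,B^2x,\ldots)$ with $B=A/r$, read off $\|V_r\|\le M_A(r)$ and $\|V_r^{-1}\|\le 1$ on the closed range, identify $V_rAV_r^{-1}=r\cdot S|_{V_r(X)}$ via $SV_r=V_rB$, and deduce (\ref{eq:32}) from spectral invariance under similarity together with $r\searrow r(A)$. Your reading of ``boundedly invertible'' as an isomorphism onto the range $V_r(X)\subset l_\infty(\mn;X)$ matches exactly what the paper does when it passes from $\tilde V$ to $V:X\to Y$.
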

\begin{proof}[Proof of the lemma]
Let $r> r(A)$ and set $T:=(1/r)A$. We are going to show that $T$ is similar to a suitable restriction of the shift operator 
$$ U : l_\infty(\mn;X) \to l_\infty(\mn;X), \quad U(x_1,x_2,\ldots)=(x_2,x_3,\ldots).$$
To this end, define $\tilde V : X \to l_\infty(\mn;X)$ by
$$ \tilde V x = (x,Tx,T^2x,\ldots).$$
Then $\tilde V$ is linear, injective and bounded with
$$ \|\tilde V\| \leq \sup_{n \in \mn_0} \|T^n\| = M_A(r).$$
Moreover, it is easy to prove that $Y:=\ran(\tilde V)$ is closed in $l_\infty(\mn;X)$ and that $Y$ is an invariant subspace for $U$. The operator $V=V_r : X \to Y, Vx=\tilde Vx$ is thus boundedly invertible  and a short calculation shows that
$$ T=V^{-1}U|_YV, \quad \|V\| \leq M_A(r) \quad \text{and} \quad \|V^{-1}\|\leq 1.$$
Finally, since $U|_Y$ is a contraction we obtain $\|VAV^{-1}\| = r \|U|_Y\| \leq r$, concluding the proof of (\ref{eq:7}). The validity of (\ref{eq:32}) is a direct consequence of (\ref{eq:7}) and the fact that $A$ and $VAV^{-1}$ have the same spectra. 
\end{proof}
We are now prepared for the 

\begin{proof}[Proof of Theorem \ref{thm:2}]
Let $s> r > r(A)$ and choose $V_r \in \bdd(X,l_\infty(\mn,X))$ as in (\ref{eq:7}). Since the operators $V_r(A+K)V_r^{-1}$ and $A+K$ have the same discrete eigenvalues we thus obtain from  (\ref{eq:16}) and (\ref{eq:7}) that
\begin{eqnarray*}
n_{A+K}(s) &=& n_{V_rAV_r^{-1}+V_rKV_r^{-1}}(s) \leq C_p \frac s {(s-\|V_rAV_r^{-1}\|)^{p+1}} \|V_rKV_r^{-1}\|_{p,\infty}^p\\
&\leq&  C_p \frac s {(s-r)^{p+1}} (\|V_r\|\|V_r^{-1}\|)^p \|K\|_{p,\infty}^p  \leq C_p \frac s {(s-r)^{p+1}} M_A^p(r) \|K\|_{p,\infty}^p. 
\end{eqnarray*}
Since this true for all $r(A)<r<s$ we obtain (\ref{eq:x}). Estimate (\ref{eq:y}) follows from (\ref{eq:x}) by choosing $r=(s+r(A))/2$.
\end{proof}

\section{The Hilbert space case}

In this section we are going to improve our above results in case that $X=\hil$ is a Hilbert space. We start with an estimate for the discrete eigenvalues of $A+K$ in the complement of the numerical range of $A$. Recall that the numerical range $\num(A)$ of $A \in \bdd(\hil)$ is defined as 
$$ \num(A)=\{ \langle Af,f \rangle : f \in \hil, \|f\|=1\}$$
and that by the Toeplitz-Hausdorff theorem it is always a convex set. Moreover, we have
\begin{equation}
  \label{eq:27}
 \sigma(A) \subset  \overline{\num}(A) \subset \overline\md_{\|A\|},
\end{equation}
see, e.g., \cite{MR1417493}.

\begin{convention}
Throughout this section, let $(\lambda_n)_{n=1}^N$ denote an enumeration of $\sigma_d(A+K) \cap (\mc \setminus \num(A))$ (which we assume to be non-empty, so $N \in \mn \cup \{\infty\}$), ordered such that 
$$ d(\lambda_1,\num(A)) \geq d(\lambda_2,\num(A)) \geq \ldots$$ and such that every eigenvalue is counted as many times as its algebraic multiplicity. 
\end{convention}

In the following theorem we consider perturbations $K$ from the approximation number ideal. We recall that in Hilbert spaces we have $\mathcal S_{p,q}^{(e)}(\hil)=\mathcal S_{p,q}^{(a)}(\hil)$. 

\begin{thm}\label{thm:3}
Let $1<p<\infty$ and $0 < q \leq \infty$. Then there exists a constant $C_{p,q} \geq 1$ such that for all $A \in \bdd(\hil)$ and $K \in \mathcal S_{p,q}^{(a)}(\hil)$ we have
\begin{equation}
  \label{eq:5}
  \|\big( d(\lambda_n, \num(A)) \big)_{n=1}^N \|_{p,q} \leq C_{p,q} \VERT{K}_{p,q}.
\end{equation}
\end{thm}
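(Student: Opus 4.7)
The plan is to combine a perturbation-determinant construction on the complement of the numerical range with the Borichev-Golinskii-Kupin (BGK) theory of zero sets of holomorphic functions on the unit disk. Since $\hil$ is a Hilbert space, $\mathcal S_{p,q}^{(a)}(\hil)$ consists of compact operators whose singular values lie in $l_{p,q}$. Because $\overline{\num(A)}$ is compact and convex, $\Om := (\mc \cup \{\infty\}) \setminus \overline{\num(A)}$ is an open, simply connected subset of the Riemann sphere containing $\infty$, and the Hilbert-space resolvent estimate $\|(z-A)^{-1}\| \leq 1/d(z, \num(A))$ is available on $\Om \cap \mc$.

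First I would introduce on $\Om$ the regularized perturbation determinant
\[
 f(z) := {\det}_{\lceil p \rceil}\bigl(I + K(z-A)^{-1}\bigr),
\]
extended by $f(\infty) := 1$. This function is holomorphic on $\Om$, and its zeros, counted with algebraic multiplicity, coincide with $\sigma_d(A+K) \cap \Om$, hence with $(\lambda_n)_{n=1}^N$. Standard estimates for regularized determinants in $s$-number ideals on Hilbert space yield a growth bound
\[
 \log |f(z)| \leq c_p \VERT{K}_{p,q}^p \, \|(z-A)^{-1}\|^p \leq c_p' \, \VERT{K}_{p,q}^p \, d(z, \num(A))^{-p},
\]
with $c_p, c_p'$ depending only on $p$ (and the Lorentz indices for the refined version).

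Next I would push everything to the unit disk. Let $\phi : \Om \to \md$ be a Riemann map with $\phi(\infty) = 0$ and set $g := f \circ \phi^{-1}$, so $g$ is holomorphic on $\md$, $g(0) = 1$, and its zeros are $(\phi(\lambda_n))$. Since the complement of $\Om$ is convex, Koebe-type distortion estimates for conformal maps onto exteriors of convex compacta are universal: $1 - |\phi(z)|$ is comparable to $d(z, \num(A))/\mathrm{diam}(\overline{\num(A)})$ up to constants that do not depend on $A$. Transferring the growth estimate gives
\[
 \log |g(w)| \leq c_{p,q}'' \, \VERT{K}_{p,q}^p \, (1-|w|)^{-p}, \qquad w \in \md.
\]
I would then invoke the Lorentz-norm refinement of the BGK inequality, which asserts that for holomorphic $h$ on $\md$ with $h(0) = 1$ and $\log |h(w)| \leq M(1-|w|)^{-p}$, $p>1$, the zeros $(w_n)$ of $h$ satisfy $\|(1-|w_n|)_n\|_{p,q} \leq C_{p,q} M^{1/p}$. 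Applied to $g$, this yields $\|(1-|\phi(\lambda_n)|)_n\|_{p,q} \leq C_{p,q} \VERT{K}_{p,q}$, and the distortion estimate immediately converts this into the claimed bound on $\|(d(\lambda_n, \num(A)))_n\|_{p,q}$.

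The main obstacle lies in keeping the constant $C_{p,q}$ genuinely universal, i.e.\ independent of $A$. This requires two ingredients: universal Koebe-type estimates for conformal maps onto exteriors of convex sets (classical, relying only on convexity of $\overline{\num(A)}$ together with the normalization $\phi(\infty)=0$), and a Lorentz-space version of the BGK inequality with constants depending only on $p,q$; the latter is the source of the restriction $p>1$ and can be extracted from the work of Favorov-Golinskii and its successors. A secondary subtlety is the determinantal growth bound in the ideal $\mathcal S_{p,q}^{(a)}$ rather than only $\mathcal S_p^{(a)}$: for $q \leq p$ it is inherited from the $l_p$ case via the embedding $l_{p,q} \subset l_p$, while for $q > p$ a short interpolation argument (or a direct use of Lorentz-space Weyl inequalities for determinants) is needed.
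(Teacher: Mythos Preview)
Your approach via perturbation determinants, conformal mapping and Borichev--Golinskii--Kupin theory has a genuine gap: the ``Lorentz-norm refinement of BGK'' you invoke does not exist with the index $p$. From the growth hypothesis $\log|g(w)| \leq M(1-|w|)^{-p}$ on $\md$, Jensen's formula yields only $n(r) \leq C M(1-r)^{-(p+1)}$ for the zero-counting function, hence at best $(1-|w_n|)_n \in l_{p+1,\infty}$; the sharp BGK Blaschke-type condition is $\sum_n (1-|w_n|)^{p+1} \leq CM$ (or the $p+1+\eps$ variant), and the exponent $p+1$ is known to be optimal in general. After transferring back via your distortion estimates, the argument would produce $\big(d(\lambda_n,\num(A))\big)_n \in l_{p+1,q}$ rather than $l_{p,q}$, i.e.\ precisely the loss of one in the exponent that the determinant/complex-analysis route always suffers. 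This is exactly the phenomenon the paper discusses for the Banach-space estimates (where $p$ turns into $p+1$), and the whole point of the Hilbert-space theorem is that one can avoid this loss. A secondary issue: for integer $p$ and $q>p$ the operator $K(z-A)^{-1}$ lies only in $\mathcal S_{p,\infty}^{(a)}\not\subset \mathcal S_p^{(a)}$, so ${\det}_{\lceil p\rceil}$ is not even defined and you would be forced to use ${\det}_{p+1}$, further degrading the growth bound.

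The paper's proof is completely different and much shorter; it uses no complex analysis at all. One takes a Schur-type triangularization: there is an orthonormal system $(e_n)$ with $(A+K)e_n \in \operatorname{span}(e_1,\dots,e_n)$ and diagonal entries $\langle (A+K)e_n,e_n\rangle = \lambda_n$. Then
\[
  |\langle Ke_n,e_n\rangle| \;=\; |\lambda_n - \langle Ae_n,e_n\rangle| \;\geq\; d(\lambda_n,\num(A)),
\]
and the result follows immediately from Pietsch's inequality
\[
  \sup_{(x_n),(y_n)} \big\| (\langle Kx_n,y_n\rangle)_n \big\|_{p,q} \;\leq\; C_{p,q}\VERT{K}_{p,q},
\]
the supremum running over pairs of orthonormal sequences. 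This gives the sharp $l_{p,q}$ bound directly, with no loss in the exponent.
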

\begin{rem}\label{rem:5}
(i) For $p=q \geq 1$ inequality (\ref{eq:5}) has been proved in \cite{Hansmann11}. In this case the constants $C_{p,p}$ are equal to $1$. While the proof of the above more general theorem is completely analogous to the proof of the $p=q$ case given in \cite{Hansmann11}, we include it for the sake of completeness.\\ 
(ii) We note that (\ref{eq:5}), for the case $p=q$, has been applied to certain operators of mathematical physics, like non-selfadjoint potential perturbations of Schr\"odinger type operators, see, e.g. \cite{MR3443274, frank15, Hansmann11}. With the help of Theorem \ref{thm:3} it should be possible to refine these results, for instance, by using the results of \cite{MR1113389} for estimates on $\mathcal S_{p,q}$-norms of Hille-Tamarkin integral operators. In this context see also \cite{MR1289751}.  
\end{rem}
\begin{proof}[Proof of Theorem \ref{thm:3}] 
There exist complex numbers $(b_{nm})$ and an orthonormal sequence $(e_n)_{n=1}^N$  such that
\begin{equation}
  \label{eq:21}
  (A+K)e_n= b_{n1}e_1 + b_{n2} e_2 + \ldots +b_{nn}e_n \quad \text{and} \quad b_{nn}=\lambda_n,
\end{equation}
see e.g. Lemma 4.1 and the accompanying remark in \cite{b_Gohberg69}. Moreover, by a result of Pietsch \cite[Theorem 2.11.18]{MR917067}, there exists a constant $C_{p,q} \geq 1$ such that
 \begin{equation}
   \label{eq:23}
  \sup \left\{ \left\| (\langle Kx_n,y_n \rangle)_{n=1}^M \right\|_{p,q} \right\} \leq C_{p,q}\VERT{K}_{p,q},
 \end{equation}
 where the supremum is taken over all (finite or infinite) orthonormal sequences $(x_n)_{n=1}^M,(y_n)_{n=1}^M$ in $\hil$. Choosing $x_n=y_n=e_n$ we obtain from (\ref{eq:23}) that
 \begin{equation}
   \label{eq:24}
\left\| (\langle Ke_n,e_n \rangle)_{n=1}^N \right\|_{p,q} \leq  C_{p,q}  \VERT{K}_{p,q}.   
 \end{equation}
But we can use (\ref{eq:21}) to compute
\begin{eqnarray*}
|\langle Ke_n,e_n \rangle| &=& |\langle(A+K) e_n,e_n \rangle - \langle Ae_n,e_n \rangle| \\
&=& |\lambda_n - \langle Ae_n,e_n \rangle| \geq d(\lambda_n,  \num(A)).  
\end{eqnarray*}
Plugging this estimate into (\ref{eq:24}) concludes the proof.
\end{proof}
\begin{rem}
  An inspection of Pietsch's proof of (\ref{eq:23}) shows that
  \begin{equation}
    \label{eq:26}
 C_{p,\infty}= \sup_{n \in \mn} \left( n^{\frac 1 p -1} \sum_{k=1}^n k^{-\frac 1 p } \right), \quad p > 1.    
  \end{equation}
In particular, using the integral test one can show that $C_{p,\infty} \leq  p/(p-1)$.
\end{rem}
\begin{cor}
Let $1<p<\infty$. For $A \in \bdd(\hil)$ and $K \in \mathcal S_{p,\infty}^{(a)}(\hil)$ we have 
\begin{equation}
  \label{eq:25}
  \#\{n : d(\lambda_n,\num(A)) > r\} \leq C^p_{p,\infty} r^{-p} \VERT{K}_{p,\infty}^p, \qquad r>0.
\end{equation}
Here $C_{p,\infty}$ is given by (\ref{eq:26}). 
\end{cor}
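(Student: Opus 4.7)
The plan is to derive the corollary as an immediate consequence of Theorem \ref{thm:3} applied with $q=\infty$, combined with the distribution-function characterization of the weak $l_p$-quasinorm recorded in \eqref{eq:1}.

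First I would invoke Theorem \ref{thm:3} with $q = \infty$ to obtain
\[
\|\bigl(d(\lambda_n, \num(A))\bigr)_{n=1}^N\|_{p,\infty} \leq C_{p,\infty} \VERT{K}_{p,\infty}.
\]
Next I would observe that by the convention adopted at the start of Section~4, the sequence $\bigl(d(\lambda_n, \num(A))\bigr)_{n=1}^N$ is already nonnegative and decreasing, so it coincides with its own decreasing rearrangement. Consequently, \eqref{eq:1} applies directly and yields, for every $r > 0$,
\[
r^p \cdot \#\{n : d(\lambda_n,\num(A)) > r\} \leq \|\bigl(d(\lambda_n,\num(A))\bigr)_{n=1}^N\|_{p,\infty}^p.
\]
Combining the two bounds and raising the first to the $p$-th power gives \eqref{eq:25}.

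There is essentially no obstacle: the statement is a one-line consequence of Theorem \ref{thm:3} together with the elementary weak-$l_p$ identity. The only point worth noting is that one must be slightly careful if the indexing set is finite (i.e.\ $N < \infty$), but this causes no trouble since $\#\{n \leq N : d(\lambda_n, \num(A)) > r\} \leq \#\{n \in \mn : x_n^* > r\}$ for the zero-extended sequence, so the inequality in \eqref{eq:1} is still applicable. The proof should therefore occupy just a few lines.
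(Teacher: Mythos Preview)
Your proposal is correct and matches the paper's own proof essentially verbatim: the paper simply says to apply \eqref{eq:1} with $x_n = d(\lambda_n,\num(A))$ and then invoke Theorem~\ref{thm:3}. Your additional remarks about the sequence already being decreasing and about the finite-$N$ case are accurate but not needed for the paper's level of detail.
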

\begin{proof}
Use (\ref{eq:1}) applied to $x_n=d(\lambda_n,\num(A))$ and then apply the previous theorem.
\end{proof}
The next corollary, for the case of Hilbert space operators, improves upon the estimates (\ref{eq:16}) and (\ref{eq:x}) . We recall that $M_A(r)=\sup_{n \in \mn_0} \|A^n\|/r^n$.
\begin{cor}
Let $1<p<\infty$. For $A \in \bdd(\hil)$ and $K \in \mathcal S_{p,\infty}^{(a)}(\hil)$ we have  
\begin{equation*} 
 n_{A+K}(s) \leq C^p_{p,\infty}  (s-\|A\|)^{-p} \VERT{K}_{{p,\infty}}^p, \qquad s>\|A\|.
\end{equation*}
More generally,
\begin{equation*}  
 n_{A+K}(s) \leq C^p_{p,\infty}  \left( \inf_{r(A) < r < s} \frac{M_A^p(r)} {(s-r)^p} \right) \VERT{K}_{{p,\infty}}^p, \qquad s>r(A).
\end{equation*}
In both cases $C_{p,\infty}$ is given by (\ref{eq:26}). 
\end{cor}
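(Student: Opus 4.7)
The plan is to mimic the two-stage strategy used for Theorem~\ref{thm:2}: first establish the weaker inequality involving $\|A\|$, then use a Rota-type similarity to push the free parameter from $\|A\|$ down to $r(A)$. The only conceptual change in the Hilbert category is that the preceding corollary (rather than Theorem~\ref{thm:1}) is the building block.

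For the first estimate, I would work directly from the preceding corollary. Since $\num(A)\subseteq \overline{\md}_{\|A\|}$ by~\eqref{eq:27}, any discrete eigenvalue $\mu$ of $A+K$ with $|\mu|>s>\|A\|$ lies outside $\num(A)$ and satisfies
\[
 d(\mu,\num(A)) \ge |\mu|-\|A\| > s-\|A\|.
\]
Hence the counting set for $n_{A+K}(s)$ is contained in the one appearing on the left-hand side of the preceding corollary applied with $r=s-\|A\|$; the two enumerations run over the same discrete spectrum and differ only in their ordering, so the cardinality bound transfers and yields the first inequality.

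For the second estimate I would repeat the proof of Theorem~\ref{thm:2} inside the Hilbert category. For each $r\in(r(A),s)$, construct a Hilbert-space Rota similarity $V_r:\hil\to\hil_r$ onto some Hilbert space $\hil_r$ with $\|V_rAV_r^{-1}\|\le r$ and $\|V_r\|\|V_r^{-1}\|\le M_A(r)$. Applying the first estimate already proved inside $\hil_r$ to $V_r(A+K)V_r^{-1}$ (whose discrete spectrum coincides with that of $A+K$), and using the ideal property $\VERT{V_rKV_r^{-1}}_{p,\infty}\le\|V_r\|\|V_r^{-1}\|\VERT{K}_{p,\infty}$, gives
\[
 n_{A+K}(s) \le C_{p,\infty}^{\,p}\,(s-r)^{-p}\,M_A^p(r)\,\VERT{K}_{p,\infty}^p,
\]
and optimising in $r\in(r(A),s)$ produces the displayed inequality.

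The step I expect to be the main obstacle is constructing the Hilbert-space Rota similarity with this sharp constant. The natural candidate $V_rx=(x,Tx,T^2x,\ldots)\in l_2(\mn_0;\hil)$ with $T=A/r$ is well defined because $r>r(A)$ forces geometric decay of $\|T^n\|$ via Gelfand's formula, and the shift-invariance argument from the Banach Rota lemma above immediately yields $\|V_rAV_r^{-1}\|\le r$ together with $\|V_r^{-1}\|\le 1$. However it produces the similarity constant $(\sum_n\|A^n\|^2/r^{2n})^{1/2}$, which is a priori larger than $M_A(r)$; replacing this $l_2$ sum by the supremum defining $M_A(r)$ requires a more careful weighted inner product (or an alternative similarity construction), and is the only point where the Hilbert proof genuinely departs from a transcription of the Banach argument.
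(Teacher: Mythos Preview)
Your treatment of the first inequality is exactly what the paper does: use $\num(A)\subset\overline{\md}_{\|A\|}$ from \eqref{eq:27} to convert $|\lambda_n|>s$ into $d(\lambda_n,\num(A))>s-\|A\|$ and then invoke \eqref{eq:25}.

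For the second inequality the paper is much terser than you: it simply writes ``proceed as in the proof of Theorem~\ref{thm:2} (or simply consider Remark~\ref{rem:3})'' and says nothing further. You have put your finger on the point this instruction glosses over. The Rota lemma actually proved in the paper produces $V_r\in\bdd(\hil,l_\infty(\mn;\hil))$, so $V_r(A+K)V_r^{-1}$ acts on a closed subspace of $l_\infty(\mn;\hil)$, which is \emph{not} a Hilbert space; but the first inequality of the present corollary was obtained via \eqref{eq:25}, whose proof genuinely uses orthonormal sequences and the numerical range and hence does not transfer to that Banach space. The natural repair is precisely the $l_2(\mn_0;\hil)$ Rota model you describe, which does stay inside the Hilbert category and reproduces the argument verbatim, but with $M_A(r)$ replaced by the larger quantity $\bigl(\sum_{n\ge 0}\|A^n\|^2/r^{2n}\bigr)^{1/2}$. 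Getting the supremum constant $M_A(r)$ as stated, together with the same $C_{p,\infty}$, would require a Hilbert-space similarity satisfying $\|V_r\|\|V_r^{-1}\|\le M_A(r)$, and there is no general reason to expect one (power-boundedness alone does not force similarity to a contraction on Hilbert space). So your plan is the right one, the obstacle you flag is genuine, and the paper's one-line reference to Theorem~\ref{thm:2} does not resolve it; the statement is correct with the $l_2$-version of $M_A$ in place of the $l_\infty$-version.
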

\begin{proof}
  First, let $s>\|A\|$. Then by (\ref{eq:27}) we obtain that 
\begin{equation*}
 n_{A+K}(s) =\# \{ n : d(\lambda_n,\overline\md_{\|A\|})>s-\|A\|\} \leq \#\{n : d(\lambda_n,\num(A)) > s-\|A\|\},
\end{equation*}
so (\ref{eq:25}) implies that
$$n_{A+K}(s) \leq C^p_{p,\infty} (s-\|A\|)^{-p} \|K\|_{p,\infty}^p, \qquad s>\|A\|.$$
Now in order to prove the second estimate  we just proceed as in the proof of Theorem \ref{thm:2} (or simply consider Remark \ref{rem:3}).      
\end{proof}

 \bibliography{Bibliography} 

\def\cydot{\leavevmode\raise.4ex\hbox{.}} \def\cprime{$'$}
\begin{thebibliography}{10}

\bibitem{MR1113389}
J.~Arazy, S.~D. Fisher, S.~Janson, and J.~Peetre.
\newblock Membership of {H}ankel operators on the ball in unitary ideals.
\newblock {\em J. London Math. Soc. (2)}, 43(3):485--508, 1991.

\bibitem{MR619953}
B.~Carl.
\newblock Entropy numbers, {$s$}-numbers, and eigenvalue problems.
\newblock {\em J. Funct. Anal.}, 41(3):290--306, 1981.

\bibitem{MR1098497}
B.~Carl and I.~Stephani.
\newblock {\em Entropy, compactness and the approximation of operators},
  volume~98 of {\em Cambridge Tracts in Mathematics}.
\newblock Cambridge University Press, Cambridge, 1990.

\bibitem{MR1289751}
F.~Cobos and T.~K\"uhn.
\newblock On {H}ille-{T}amarkin operators and {S}chatten classes.
\newblock {\em Ark. Mat.}, 30(2):217--220, 1992.

\bibitem{MR3296588}
M.~Demuth, F.~Hanauska, M.~Hansmann, and G.~Katriel.
\newblock Estimating the number of eigenvalues of linear operators on {B}anach
  spaces.
\newblock {\em J. Funct. Anal.}, 268(4):1032--1052, 2015.

\bibitem{MR3443274}
C.~Dubuisson.
\newblock Note on {L}ieb-{T}hirring type inequalities for a complex
  perturbation of fractional {L}aplacian.
\newblock {\em Zh. Mat. Fiz. Anal. Geom.}, 11(3):245--266, 301, 304, 2015.

\bibitem{MR632627}
D.~E. Edmunds and H.~Triebel.
\newblock Entropy numbers for noncompact selfadjoint operators in {H}ilbert
  spaces.
\newblock {\em Math. Nachr.}, 100:213--219, 1981.

\bibitem{MR2186094}
S.~P. Eveson.
\newblock Asymptotic behaviour of iterates of {V}olterra operators on
  {$L^p(0,1)$}.
\newblock {\em Integral Equations Operator Theory}, 53(3):331--341, 2005.

\bibitem{frank15}
R.~L. Frank.
\newblock Eigenvalue bounds for {S}chr\"odinger operators with complex
  potentials. {III}.
\newblock {\em Trans. Amer. Math. Soc.}, 370(1):219--240, 2018.

\bibitem{MR1130394}
I.~Gohberg, S.~Goldberg, and M.~A. Kaashoek.
\newblock {\em Classes of linear operators. {V}ol. {I}}, volume~49 of {\em
  Operator Theory: Advances and Applications}.
\newblock Birkh\"auser Verlag, Basel, 1990.

\bibitem{b_Gohberg69}
I.~Gohberg and M.~G. Krein.
\newblock {\em Introduction to the theory of linear nonselfadjoint operators}.
\newblock American Mathematical Society, Providence, R.I., 1969.

\bibitem{MR1417493}
K.~E. Gustafson and D.~K.~M. Rao.
\newblock {\em Numerical range}.
\newblock Universitext. Springer-Verlag, New York, 1997.
\newblock The field of values of linear operators and matrices.

\bibitem{Hansmann11}
M.~Hansmann.
\newblock An eigenvalue estimate and its application to non-selfadjoint
  {J}acobi and {S}chr\"odinger operators.
\newblock {\em Lett. Math. Phys.}, 98(1):79--95, 2011.

\bibitem{Hansmann15}
M.~Hansmann.
\newblock Perturbation determinants in {B}anach spaces---with an application to
  eigenvalue estimates for perturbed operators.
\newblock {\em Math. Nachr.}, 289(13):1606--1625, 2016.

\bibitem{Han17a}
M.~Hansmann.
\newblock Some remarks on upper bounds for {W}eierstrass primary factors and
  their application in spectral theory.
\newblock {\em Complex Anal. Oper. Theory}, 11(6):1467--1476, 2017.

\bibitem{MR1863710}
H.~K{\"o}nig.
\newblock Eigenvalues of operators and applications.
\newblock In {\em Handbook of the geometry of {B}anach spaces, {V}ol. {I}},
  pages 941--974. North-Holland, Amsterdam, 2001.

\bibitem{MR3282642}
K.~D. K{\"u}rsten and A.~Pietsch.
\newblock Non-approximable compact operators.
\newblock {\em Arch. Math. (Basel)}, 103(6):473--480, 2014.

\bibitem{MR917067}
A.~Pietsch.
\newblock {\em Eigenvalues and {$s$}-numbers}, volume~43 of {\em Mathematik und
  ihre Anwendungen in Physik und Technik [Mathematics and its Applications in
  Physics and Technology]}.
\newblock Akademische Verlagsgesellschaft Geest \& Portig K.-G., Leipzig, 1987.

\bibitem{MR1555146}
F.~Riesz.
\newblock \"{U}ber lineare {F}unktionalgleichungen.
\newblock {\em Acta Math.}, 41(1):71--98, 1916.

\bibitem{MR0112040}
G.~C. Rota.
\newblock On models for linear operators.
\newblock {\em Comm. Pure Appl. Math.}, 13:469--472, 1960.

\bibitem{MR751086}
J.~Zem{\'a}nek.
\newblock The essential spectral radius and the {R}iesz part of the spectrum.
\newblock In {\em Functions, series, operators, {V}ol. {I}, {II} ({B}udapest,
  1980)}, volume~35 of {\em Colloq. Math. Soc. J\'anos Bolyai}, pages
  1275--1289. North-Holland, Amsterdam, 1983.

\end{thebibliography}
 \bibliographystyle{plain}

\end{document}